\newtheorem{thm}[equation]{Theorem}
\newtheorem{lem}[equation]{Lemma}
\theoremstyle{remark}
\newtheorem{rem}[equation]{Remark}
\newtheorem{ex}[equation]{Example}
\theoremstyle{definition}
\numberwithin{equation}{section}
\newcommand{\End}{\operatorname{End}}
\newcommand{\Hom}{\operatorname{Hom}}
\newcommand{\fg}{{\mathfrak g}}
\newcommand{\fk}{{\mathfrak k}}
\newcommand{\fp}{{\mathfrak p}}
\newcommand{\ft}{{\mathfrak t}}
\newcommand{\be}{\begin{equation}}
\newcommand{\beu}{\begin{equation*}}
\newcommand{\bbar}{\,\big|\,}
\newcommand{\pr}{\operatorname{pr}}
\newcommand\la{{\lambda}}
\newcommand\al{{\alpha}}
\newcommand{\twedge}{{\textstyle{\bigwedge}}}
\newcommand\GL{\operatorname{GL}}
\newcommand\gr{\operatorname{gr}}
\newcommand\Gr{\operatorname{Gr}}
\newcommand\id{\operatorname{id}}
\newcommand{\pf}{\begin{proof}}
\newcommand{\epf}{\end{proof}}
\newcommand{\eq}{\begin{equation}}
\newcommand{\eeq}{\end{equation}}
\newcommand{\eqn}{\begin{equation*}}
\newcommand{\eeqn}{\end{equation*}}
\newcommand{\frg}{\mathfrak{g}}
\newcommand{\frk}{\mathfrak{k}}
\newcommand{\frp}{\mathfrak{p}}
\newcommand{\frt}{\mathfrak{t}}
\newcommand{\bbC}{\mathbb{C}}
\newcommand{\smat}{\left(\begin{smallmatrix}}
\newcommand{\esmat}{\end{smallmatrix}\right)}
\newcommand{\pmat}{\begin{pmatrix}}
\newcommand{\epmat}{\end{pmatrix}}
\newcommand{\cP}{\mathcal{P}}
\newcommand{\fgl}{\mathfrak{gl}}
\DeclareMathOperator{\Cl}{Cl}
\DeclareMathOperator{\ev}{ev}
\title[Clifford algebras and Littlewood-Richardson coefficients]{Clifford algebras and Littlewood-Richardson coefficients}
\author{Kieran Calvert}
\address[K.~Calvert]{School of Mathematical Sciences, Lancaster University, Bailrigg, Lancaster, UK}
\email{kieran.calvert@lancaster.ac.uk}
\author{Karmen Grizelj}
\address[K.~Grizelj]{Department of Mathematics, Faculty of Science, University of Zagreb, Bijeni\v{c}ka cesta 30, 10000 Zagreb, Croatia}
\email{karmen.grizelj@math.hr}
\author{Andrey Krutov}
\address[A.~Krutov]{Mathematical Institute of Charles University, Sokolovsk\'a 83, 186 75 Prague, Czech Republic} 
\email{andrey.krutov@matfyz.cuni.cz}
\author{Pavle Pand\v{z}i\'c}
\address[P.~Pand\v{z}i\'c]{Department of Mathematics, Faculty of Science, University of Zagreb, Bijeni\v{c}ka cesta 30, 10000 Zagreb, Croatia}
\email{pandzic@math.hr}
\thanks{
  K.~Grizelj and P.~Pand\v{z}i\'{c} were supported by the project ``Implementation of cutting-edge research and its application as part of the Scientific Center of Excellence for Quantum and Complex Systems, and Representations of Lie Algebras'', PK.1.1.02, European Union, European Regional Development Fund.
  A.~Krutov was supported by the GA\v{C}R project 24-10887S and HORIZON-MSCA-2022-SE-01-01 CaLIGOLA.
  This article is based upon work from COST Action CaLISTA CA21109 supported by COST (European
  Cooperation in Science and Technology, \url{www.cost.eu}).
}
\subjclass[2020]{
  22E47, 
  14N15, 
  05E05} 
\keywords{Clifford algebra, spin module, Schur polynomial, Schubert calculus, Littlewood-Richardson coefficients, Grassmannian}
\begin{document}

\begin{abstract}
We show how to use Clifford algebra techniques to describe the de Rham cohomology ring of equal rank compact symmetric spaces $G/K$. In particular, for $G/K=U(n)/U(k)\times U(n-k)$, we obtain a new way of multiplying Schur polynomials, i.e., computing the Littlewood-Richardson coefficients. 
The corresponding multiplication on the Clifford algebra side is, in a convenient basis given by projections of the spin module, simply the componentwise multiplication of vectors in $\bbC^N$, also known as the Hadamard product.
\end{abstract}

\sloppy

\maketitle

\section{Introduction}

In this paper we consider the classical compact symmetric space $G/K=U(n)/U(k)\times U(n-k)$. It is well known that this space is diffeomorphic to the complex Grassmannian 
$\Gr(k,\bbC^n)$ of $k$-dimensional subspaces of $\bbC^n$. The de Rham cohomology $H(G/K)$ (with complex coefficients) is very well understood through Schubert calculus. 
One of the well known results about $H(G/K)$ (due to \'E.Cartan and de Rham) is that it is isomorphic to $(\twedge\frp^*)^\frk$, where $\frg=\frk\oplus\frp$ is the decomposition of the complexified Lie algebra of $G$ into eigenspaces of the (differentiated) involution defining $K$. By classical results of Borel, H.~Cartan and others, 
\eq\label{thmCB}
  \bbC[\frt^*]^{W_\frk}/ I_+ \cong (\twedge\frp^*)^\frk.
\eeq
Here $\frt$ is a~Cartan subalgebra of~$\frk$ and $\frg$, $W_\frk$ is the Weyl group of $(\frk,\frt)$, and $I_+$ is the ideal in $\bbC[\frt^*]^{W_\frk}$ generated by the  polynomials without constant term invariant under the Weyl group $W_\frg$ of $(\frg,\frt)$.

Furthermore, there is a~basis for $(\twedge\frp^*)^\frk$ given by the images under the isomorphism~\eqref{thmCB} of Schur polynomials $s_\la\in\bbC[\frt^*]^{W_\frk}$   associated to partitions~$\la$ with Young diagrams inside the $k\times (n-k)$ box. 
For a~definition of $s_\la$ see e.g.~\cite[Chapter 1]{mac}. 

The multiplication of Schur polynomials is described by the Littlewoood-Richardson rule~\cite{LittlewoodRichardson34}:  
\[ 
s_\lambda s_\mu = \sum_{\nu} c_{\lambda \mu}^\nu s_{\nu},
\]
where $c_{\lambda \mu}^\nu$ are the Littlewood-Richardson (LR) coefficients.  
The LR coefficients also appear in other places. For example, let $V_\la$, $V_\mu$ and $V_\nu$ be finite-dimensional representations of $\GL_m(\bbC)$ with highest weights given by partitions $\lambda,\mu,\nu$. Then one has
\[
   \dim \Hom_{GL_m(\mathbb{C})}(V_\nu,V_\lambda\otimes V_\mu)= c^{\nu}_{\lambda\mu}.
\]
The LR coefficients also appear in branching laws as well as in Schubert calculus.

The multiplication of the basis of Schur polynomials in $H(\Gr(k,\bbC^n))$ is again given by the Littlewood-Richardson rule, but we replace
$c_{\lambda \mu}^\nu$ by zero if $\nu$ does not fit in the $ k \times (n-k)$  box; for example, see~\cite{Lesieur1947, Borel1953, BGG1973} and~\cite[Theorem 3.1]{Stanley76}.

In the following, we denote the subspace of  $\bbC[\frt^*]^{W_\frk}$ spanned by the above Schur polynomials by $\cP_{k\times (n-k)}$.

In~\cite{CNP} and~\cite{CGKP} we study a filtered deformation of $(\twedge\frp^*)^\frk\cong (\twedge\frp)^\frk$, given as the $\fk$-invariants $\Cl(\frp)^\frk$ in the Clifford algebra $\Cl(\frp)$ with respect to the trace form~$B$. 
Recall that $\Cl(\frp)$ is defined as the quotient of the tensor algebra $T(\frp)$ by the ideal generated by 
\[
X\otimes Y+Y\otimes X-2B(X,Y),\qquad X,Y\in\frp.
\]
The grading by degree of $T(\frp)$ induces a~filtration of $\Cl(\frp)$, and the associated graded algebra of the filtered algebra $\Cl(\frp)$ is $\twedge\frp$. Passing to $\frk$-invariants, we conclude that $\Cl(\frp)^\frk$ is a~filtered algebra with the associated graded algebra equal to $(\twedge\frp)^\frk$.

The main advantage of $\Cl(\frp)^\frk$ over $(\twedge\frp)^\frk$ is that the algebra structure is much simpler. Namely, $\Cl(\frp)$ is the endomorphism algebra of the spin module $S$, which becomes a~$\frk$-module through the map $\alpha:U(\frk)\to \Cl(\frp)$ defined on $X\in\frk$  by
\eq\label{alpha}
\alpha(X)=\frac{1}{4}\sum_i [X,b_i]d_i,
\eeq
where $b_i$ is any basis of~$\frp$ and~$d_i$ is the dual basis with respect to~$B$. One knows that the $\frk$-module $S$ is multiplicity free, with $|W_\frg|/|W_\frk|=\binom{n}{k}$ components. 
In fact, these components have highest weights $\sigma\rho-\rho_\frk$, where~$\rho$ and~$\rho_\frk$ are the half sums of compatible positive root systems for $(\frg,\frt)$ respectively $(\frk,\frt)$, and $\sigma$ runs over the set 
\eq\label{w1}
W^1=\{\sigma\in W_\frg\bbar \sigma\rho \text{ is $\frk$-dominant}\}.
\eeq
It now follows from Schur's lemma that $\Cl(\frp)^\frk=\End_\frk S$ is spanned by the orthogonal projections to these components. In this way the algebra $\Cl(\frp)^\frk$ becomes isomorphic to $\bbC^{\binom{n}{k}}$ with componentwise multiplication, also called the Hadamard product.

The following filtered analogue of the Cartan--\/Borel theorem is proved in~\cite[Corollary 3.15]{CNP}; we give a slightly more direct argument in Section~\ref{s:prelims}. A~version of this theorem when~$\frg$ and~$\frk$ are not of equal rank will appear in~\cite{CGKP}.

\begin{thm}
\label{thm:structure} The algebra homomorphism $\alpha$ of~\eqref{alpha} restricted to $U(\frk)^\frk\cong \bbC[\frt^*]^{W_\frk}$ induces a~filtered algebra isomorphism 
\[
\bbC[\frt^*]^{W_\frk} /I_\rho\cong \Cl(\frp)^\frk,
\]
where $I_\rho$ is the ideal in $\bbC[\frt^*]^{W_\frk}$ generated by the $W_\frg$-invariant polynomials vanishing at~$\rho$.
\end{thm}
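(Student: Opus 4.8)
The plan is to make the homomorphism of the statement completely explicit and then reduce everything to a question about evaluating $W_\frk$-invariant polynomials on a single $W_\frg$-orbit.

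First I would pin down the map. Since $\frk$ is reductive, $U(\frk)^\frk=Z(U(\frk))$, and I take the identification $U(\frk)^\frk\cong\bbC[\frt^*]^{W_\frk}$ to be the Harish-Chandra homomorphism normalized by the $\rho_\frk$-shift, so that a central element $z$, with image $p_z\in\bbC[\frt^*]^{W_\frk}$, acts on the irreducible $\frk$-module of highest weight $\mu$ by the scalar $p_z(\mu+\rho_\frk)$. Since $\alpha\colon U(\frk)\to\Cl(\frp)$ is an algebra map intertwining the adjoint $\frk$-actions, it carries $Z(U(\frk))$ into $\End_\frk S=\Cl(\frp)^\frk$; and since $S=\bigoplus_{\sigma\in W^1}S_\sigma$ is multiplicity free with $S_\sigma$ of highest weight $\sigma\rho-\rho_\frk$, Schur's lemma gives $\alpha(z)=\sum_{\sigma\in W^1}p_z(\sigma\rho)\,\pr_\sigma$, where $\pr_\sigma$ is the projection onto $S_\sigma$. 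Under the identification $\Cl(\frp)^\frk\cong\bbC^{W^1}$ via these projections, the map of the theorem becomes the evaluation map
\[
\ev\colon\bbC[\frt^*]^{W_\frk}\longrightarrow\bbC^{W^1},\qquad p\longmapsto\bigl(p(\sigma\rho)\bigr)_{\sigma\in W^1},
\]
so it remains to show that $\ev$ is surjective with kernel $I_\rho$.

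The inclusion $I_\rho\subseteq\Ker\ev$ is immediate: a $W_\frg$-invariant $q$ with $q(\rho)=0$ vanishes on all of $W_\frg\cdot\rho$, hence so does every element of $I_\rho$. For surjectivity I would use the Reynolds operator: since $\rho$ is regular the $|W_\frg|$ points $w\rho$ are distinct, so restriction $\bbC[\frt^*]\to\mathrm{Fun}(W_\frg\cdot\rho)$ is onto and $W_\frg$-equivariant, and averaging over $W_\frk$ shows $\bbC[\frt^*]^{W_\frk}$ maps onto $\mathrm{Fun}(W_\frg\cdot\rho)^{W_\frk}=\mathrm{Fun}(W_\frg\cdot\rho/W_\frk)$; since $\sigma\mapsto W_\frk\sigma\rho$ identifies $W^1$ with $W_\frg\cdot\rho/W_\frk$ (by regularity of $\rho$, each coset in $W_\frk\backslash W_\frg$ has a unique representative making $\sigma\rho$ $\frk$-dominant), this is exactly $\ev$. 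Finally a dimension count closes the gap: $\bbC[\frt^*]^{W_\frk}$ is a free module of rank $|W_\frg|/|W_\frk|=\binom{n}{k}$ over $\bbC[\frt^*]^{W_\frg}$ (Chevalley together with Cohen--Macaulayness; explicitly, a basis is given by the Schur polynomials $s_\la$ of partitions inside the $k\times(n-k)$ box), so $\bbC[\frt^*]^{W_\frk}/I_\rho=\bbC[\frt^*]^{W_\frk}\otimes_{\bbC[\frt^*]^{W_\frg}}\bbC_\rho$ has dimension $\binom{n}{k}=\dim_\bbC\bbC^{W^1}$; together with surjectivity and $I_\rho\subseteq\Ker\ev$ this forces $I_\rho=\Ker\ev$ and makes $\ev$ descend to an isomorphism $\bbC[\frt^*]^{W_\frk}/I_\rho\cong\Cl(\frp)^\frk$. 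This is an algebra isomorphism since $\alpha$ is an algebra map, and it is filtered: passing to associated graded algebras for the degree filtration, the leading forms of the generators $q-q(\rho)$ of $I_\rho$ generate the ideal $I_+$, so $\gr(\bbC[\frt^*]^{W_\frk}/I_\rho)$ is a quotient of $\bbC[\frt^*]^{W_\frk}/I_+\cong(\twedge\frp^*)^\frk$, which has the same dimension; hence the associated graded map is the classical isomorphism~\eqref{thmCB}. (One could also read the last dimension bound directly off~\eqref{thmCB} and skip the freeness statement.)

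The steps carrying the real content are the first one — identifying the restriction of $\alpha$ with the evaluation map $\ev$, which requires combining the known $\frk$-decomposition of $S$ with the correctly normalized Harish-Chandra homomorphism so that the infinitesimal character of $S_\sigma$ comes out as $\sigma\rho$ — and the reverse inclusion $\Ker\ev\subseteq I_\rho$, which is where the freeness of $\bbC[\frt^*]^{W_\frk}$ over $\bbC[\frt^*]^{W_\frg}$ (equivalently, the known dimension of $H(\Gr(k,\bbC^n))$) and the regularity of $\rho$ enter. The mildly delicate point in the latter is that $I_\rho$, unlike $I_+$, is not homogeneous, which is why one argues with leading forms rather than with graded pieces directly; everything else is elementary invariant theory and linear algebra.
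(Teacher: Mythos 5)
Your proof is correct and its overall architecture is the same as the paper's: identify $\alpha|_{U(\frk)^\frk}$ with the evaluation map $p\mapsto(p(\sigma\rho))_{\sigma\in W^1}$ via Harish--Chandra and the known decomposition of $S$, prove surjectivity by interpolation followed by averaging over $W_\frk$, note $I_\rho\subseteq\ker$, and close the gap by a codimension count. The two supporting lemmas are, however, proved by different (equally valid) means. For surjectivity the paper writes down explicit Lagrange-type polynomials $Q_\sigma$ and then symmetrizes them over $W_\frk$; your Reynolds-operator phrasing (restriction $\bbC[\frt^*]\to\mathrm{Fun}(W_\frg\cdot\rho)$ is onto and equivariant, take $W_\frk$-invariants, and use regularity of $\rho$ to identify $W_\frg\cdot\rho/W_\frk$ with $W^1$) is the same idea packaged more abstractly. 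For the codimension the paper takes $W_\frk$-invariants of the coinvariant-algebra exact sequence to get $I_+=J^{W_\frk}$ of codimension $|W^1|$ and then uses that $\gr$ preserves codimension; you instead invoke freeness of $\bbC[\frt^*]^{W_\frk}$ over $\bbC[\frt^*]^{W_\frg}$ of rank $|W_\frg|/|W_\frk|$ and base-change to the fiber at $\rho$, which computes $\dim\bbC[\frt^*]^{W_\frk}/I_\rho$ directly without passing through the graded picture. Your route buys a cleaner exact count at the cost of citing the freeness theorem, while the paper's route is more self-contained and simultaneously establishes the graded statement it needs; your final remark that the leading forms of the generators only a priori generate an ideal \emph{contained} in $\gr I_\rho$, with equality forced by the dimension count, correctly handles the one non-homogeneous subtlety.
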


The Cartan--\/Borel theorem~\eqref{thmCB} can be deduced from Theorem~\ref{thm:structure} by passing to associated graded algebras. 
In particular, the isomorphism~\eqref{thmCB} is induced by~$\gr\alpha$.
    
Through the above identifications, (images of) Schur polynomials define a~basis for $\Cl(\frp)^\frk$, and since the multiplication in $\Cl(\frp)^\frk$ is equal to the multiplication in $(\twedge\frp^*)^\frk$ modulo lower order terms, we can recover the Littlewood-Richardson coefficients from the multiplication in $\Cl(\frp)^\frk$, which is simply the Hadamard product in the basis of projections. Following Kostant, we view~$\twedge\frp$ and~$\Cl(\frp)$ as the same space with two multiplications, $\wedge$ respectively~$\bullet$. In particular, this space is graded, and we denote by~$a_{[d]}$ the $d$th graded component of~$a$ in this space. (We point out that while~$\wedge$ is compatible with this grading, $\bullet$ is not.)

The main result of this paper is
\begin{thm}\label{thm:main}
  Given Schur polynomials~$s_\lambda$ and~$s_\mu$ of degree $l(\lambda)$ respectively $l(\mu)$, 
  we can calculate the wedge product  of their images  
  in $H^*(\Gr(k,\mathbb{C}^{n}))\cong \bbC[\ft^*]^{W_\frk}/I_+$ as
  \[
    \gr\al(s_\lambda)\wedge  \gr\al(s_\mu) = (\al(s_\lambda)\bullet_H  \al(s_\mu))_{[2l(\lambda)+2l(\mu)]}, 
  \]
This describes a~novel way to calculate the Littlewood-Richardson coefficients using the~Hadamard product (componentwise multiplication) $\bullet_H$, a~linear isomorphism from the space $\cP_{k \times (n-k)}$ of Schur polynomials to $\Cl(\frp)^\frk$ induced by $\alpha$, and the inverse of this isomorphism. Let $M$ be the matrix sending the basis $s_\lambda  \in
\cP_{n \times (n-k)}$ to $ \alpha(s_\lambda) \in \mathrm{Span}\{ \pr_\sigma\}$, and $P_{[d]}$ be the projection to degree $d$ polynomials.
Then the multiplication of Schur polynomials~$s_\lambda$ and~$s_\mu$ in the cohomology ring $H(\Gr(k,\bbC^n)) \cong \twedge(\frp)^\frk$ is 
\[   s_\lambda \cdot s_\mu 
 =P_{[l(\lambda) +  l (\mu)]}M^{-1}(M(s_\lambda) \bullet_H M(s_\mu)).\]
\end{thm}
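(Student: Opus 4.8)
The plan is to leverage the two key inputs already available: (i) Theorem~\ref{thm:structure}, which identifies $\Cl(\frp)^\frk$ with $\bbC[\ft^*]^{W_\frk}/I_\rho$ via $\al$, and (ii) the Cartan--Borel theorem~\eqref{thmCB}, which identifies $H(\Gr(k,\bbC^n))\cong(\twedge\frp^*)^\frk$ with $\bbC[\ft^*]^{W_\frk}/I_+$ via $\gr\al$. The central structural fact to exploit is that $\gr\al$ is the associated graded map of $\al$: viewing $\twedge\frp$ and $\Cl(\frp)$ as the same graded vector space with products $\wedge$ and $\bullet$, one has for $X\in\frp^{\wedge j}$, $Y\in\frp^{\wedge \ell}$ that $X\bullet Y = X\wedge Y + (\text{terms of degree} < j+\ell)$. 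Hence the top-degree component of a $\bullet$-product of homogeneous elements recovers the $\wedge$-product.

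First I would pin down the degree bookkeeping. A Schur polynomial $s_\lambda$ of "degree $l(\lambda)$" (here $l(\lambda)$ is the number of boxes in the Young diagram, i.e.\ its total degree as a polynomial in $\bbC[\ft^*]$) maps under $\gr\al$ to an element of $(\twedge\frp^*)^\frk$ of pure wedge-degree $2l(\lambda)$ — the factor $2$ coming from the fact that $\frp$ for $U(n)/U(k)\times U(n-k)$ carries a complex structure, so that the generators of $\bbC[\ft^*]^{W_\frk}$ of polynomial degree $1$ correspond to $2$-forms (this is exactly the statement that $(\twedge\frp^*)^\frk$ in degrees $2d$ matches degree-$d$ harmonic polynomials, which underlies the basis-by-Schur-polynomials claim recalled in the introduction). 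Since $\al(s_\lambda)\in\Cl(\frp)^\frk$ is the (non-homogeneous) lift of $\gr\al(s_\lambda)$, its top graded component $(\al(s_\lambda))_{[2l(\lambda)]}$ equals $\gr\al(s_\lambda)$, and all lower components are supported in degrees $< 2l(\lambda)$.

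Next I would compute the product. By the associated-graded relation, $\al(s_\lambda)\bullet_H\al(s_\mu)$ has top graded component (in degree $2l(\lambda)+2l(\mu)$) equal to the $\wedge$-product of the top components of the two factors, which by the previous paragraph is precisely $\gr\al(s_\lambda)\wedge\gr\al(s_\mu)$; lower-degree terms in $\al(s_\lambda)\bullet_H\al(s_\mu)$ are irrelevant to this top component. This is the displayed identity $\gr\al(s_\lambda)\wedge\gr\al(s_\mu) = (\al(s_\lambda)\bullet_H\al(s_\mu))_{[2l(\lambda)+2l(\mu)]}$. Translating back through the isomorphism of Theorem~\ref{thm:structure}: under the linear isomorphism $M\colon\cP_{k\times(n-k)}\to\Cl(\frp)^\frk$, $s_\nu\mapsto\al(s_\nu)$, componentwise multiplication on the target corresponds on the source to the operation $s_\lambda,s_\mu\mapsto M^{-1}(M(s_\lambda)\bullet_H M(s_\mu))$, which is exactly $\al$-preimage of $\al(s_\lambda)\bullet_H\al(s_\mu)$; applying the degree-$(l(\lambda)+l(\mu))$ polynomial projection $P_{[l(\lambda)+l(\mu)]}$ isolates the component corresponding (via $\gr\al$) to the wedge-degree-$2(l(\lambda)+l(\mu))$ part, which by the Cartan--Borel identification is $s_\lambda\cdot s_\mu$ in $H(\Gr(k,\bbC^n))$. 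The coefficients extracted are by definition the (box-truncated) Littlewood--Richardson coefficients $c_{\lambda\mu}^\nu$.

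The main obstacle — and the one place where care is genuinely needed rather than formal — is justifying that $\Cl(\frp)^\frk$ is the honest filtered deformation of $(\twedge\frp)^\frk$ with $\gr\al$ equal to the associated graded of $\al$, so that the "top component" manipulation is legitimate; this rests on the fact (recalled in the excerpt, from~\cite{CNP}) that $\gr\Cl(\frp)^\frk=(\twedge\frp)^\frk$ and that $\al$ descends compatibly with filtrations, together with checking that the two identifications of $\bbC[\ft^*]^{W_\frk}$-quotients are compatible (i.e.\ $\gr I_\rho = I_+$, equivalently that the top-degree parts of the $W_\frg$-invariants vanishing at $\rho$ generate $I_+$). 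A secondary bookkeeping point is the factor-of-two between polynomial degree and form degree, which must be tracked consistently through both $M$ and $P_{[d]}$; once these normalizations are fixed the rest of the argument is a diagram chase.
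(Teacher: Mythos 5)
Your proposal is correct and follows essentially the same route as the paper: the paper's proof is exactly the observation that $a_{[k]}\wedge b_{[l]}=(a\bullet b)_{[k+l]}$ for elements in the $k$th and $l$th filtered pieces, combined with $\gr\alpha(s_\lambda)=\alpha(s_\lambda)_{[2l(\lambda)]}$, which is the core of your second and third paragraphs. The compatibility points you flag at the end ($\gr I_\rho=I_+$, and that \eqref{thmCB} is induced by $\gr\alpha$) are indeed the supporting facts the paper establishes separately (in Lemma~\ref{l::Icodim} and the remarks following Theorem~\ref{thm:structure}) rather than inside this proof.
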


The above is illustrated by Algorithm~\ref{a:alg1} and examples in Section~\ref{s:apptoLR}. Furthermore, the linear isomorphism corresponding to~$M$ is easily described as evaluation of Schur polynomials at points~$\sigma \rho$ for $\sigma \in W^1$, see~\eqref{evrho}.

To obtain the full list of LR coefficients for a~given $\lambda$ and~$\mu$, one can
consider the case of $n = 2(l(\lambda) + l(\mu))$, $k=l(\lambda)+l(\mu)$. But in order to compute
$c_{\lambda\mu}^\nu$ for given~$\lambda$, $\mu$, and~$\nu$ it is enough to consider, for example, the minimal box $\cP_{k\times (n-k)}$ such that
$\lambda,\mu,\nu\in\cP_{k\times (n-k)}$.

\begin{algorithm}[H]
\caption{An algorithm to calculate $s_\lambda \cdot s_\mu$}\label{a:alg1}
\begin{algorithmic}
\Require $\lambda,\mu \in \cP_{k\times (n-k)}$
\Ensure the list of $c_{\lambda\mu}^\nu$ for $\nu\in\cP_{k\times(n-k)}$
\State $\mathrm{Poly} =\{s_\nu: \nu \in \cP_{k \times (n-k)}\}$, $N = {n \choose k}$, $\mathrm{Pts} = \{\}$.
\State $E_\lambda \gets $ the coordinate vector  of $s_\lambda$ in the basis $\mathrm{Poly}$
\State $E_\mu \gets $ the coordinate vector of $s_\mu$ in the basis $\mathrm{Poly}$
\State $\rho \gets \{ \frac{n+1}{2}, \frac{n+1}{2}-1, \cdots, \frac{n+1}{2} - (n-1), \frac{n+1}{2} - (n-1)\}$
\Comment{Calculate~$\rho$}
\For{$\sigma$ a $k$ shuffle of $n$}
\State $\mathrm{Pts} \gets \mathrm{Pts} \cup \sigma(\rho)$ \Comment{Calculate the points $\{ \sigma \rho : \sigma \in W^1\}$}
\EndFor
\For{$i = 1,\cdots, N$}
\For{$j = 1, \cdots, N$}
    \State $M[i,j] \gets \mathrm{Poly}[i](\mathrm{Pts}[j])$ \Comment{$M$ is the basis change $\{s_\lambda\}$ to $\{\pr_{\sigma}\}$}
\EndFor
\EndFor
\State $X_\lambda \gets M \cdot E_\lambda$, $X_\mu \gets M\cdot E_\mu$ \Comment{Coordinates of $s_\lambda$,$s_\mu$ in the basis of $\{\pr_\sigma\}$}
\For{$i =1,\cdots,N$} 
\State $W[i] \gets X_\lambda[i]\times X_\mu[i]$ \Comment{Hadamard product}
\EndFor
\State $Z \gets M^{-1}\cdot W$ \Comment{Back to the basis of $\{s_\lambda\}$}
\For{$ i = 1,\cdots, N$} 
\If{$\deg \mathrm{Poly}[i] \neq \deg s_\lambda +\deg s_\mu$}
    \State $Z[i] \gets 0$ \Comment{Removing lower order terms}
\EndIf
\EndFor
\State \Return $Z$ \Comment{The list of $c_{\lambda\mu}^\nu$}
\end{algorithmic}
\end{algorithm}
In Section~\ref{s:prelims} we discuss the $\frk$-decomposition of the spin module~$S$ and prove Theorems~\ref{thm:structure} and~\ref{thm:main}. In Section~\ref{s:apptoLR} we compute a~couple of examples, calculated using~\cite[Mathematica]{Mathematica141}.

\section{Description of $\Cl(\frp)^\frk$ for equal rank symmetric pairs}\label{s:prelims}

In this section~$G$ and~$K$ are as in the introduction, but $(G,K)$ could equally well be any equal rank compact symmetric pair with~$G$ and~$K$ connected (for disconnected $K$, see~\cite[Section 3]{CNP}). Since~$G$ and~$K$ are connected it is enough to work with Lie algebras~$\frg$ and~$\frk$. In this more general situation, the form~$B$ is replaced by any nondegenerate invariant symmetric bilinear form on~$\frg$, and~$\frp$ is still the $-1$ eigenspace of the involution defining~$\frk$ (or equivalently, the orthogonal of~$\frk$ with respect to~$B$).

Let us first review some facts about the spin module~$S$ for $\Cl(\frp)$. 
Since $\dim\frp$ is even, the Clifford algebra $\Cl(\frp)$ has only one simple module~$S$, and moreover $\Cl(\frp)\cong\End S$. To construct~$S$, write
\[
\frp=\frp^+\oplus\frp^-,
\]
where $\frp^+$ and $\frp^-$ are maximal isotropic subspaces of~$\frp$ in duality under~$B$. Then one can take $S=\twedge\frp^+$, with elements of~$\frp^+$ acting by wedging, and elements of~$\frp^-$ by contracting. 
As mentioned in the introduction, $S$ becomes a~$\frk$-module through the map~$\alpha$ of~\eqref{alpha}. Furthermore, the $\frk$-module~$S$ is multiplicity free and decomposes as
\[
S=\bigoplus_{\sigma\in W^1} E_{\sigma\rho-\rho_\frk},
\]
where~$\rho$, $\rho_\frk$ and $W^1$ are as in the introduction, see~\eqref{w1},
and $E_{\sigma\rho-\rho_\frk}$ denotes the irreducible finite-dimensional $\frk$-module with highest weight $\sigma\rho-\rho_\frk$.

Now $\Cl(\frp)\cong\End S$ implies 
\[
\Cl(\frp)^\frk=\End_\frk S,
\]
and by Schur's Lemma this is the algebra of projections $\pr_\sigma: S\to E_{\sigma\rho-\rho_\frk}$, which we denote by $\Pr(S)$. This is a~very simple commutative algebra, isomorphic to $\bbC^{|W^1|}$ with coordinatewise multiplication (Hadamard product), by identifying~$\pr_\sigma$ with $(0,\dots,0,1,0,\dots,0)$, with~$1$ in the place corresponding to~$\sigma$.

The restriction of~$\al$ to the center $U(\frk)^\frk$ of $U(\frk)$ has a~very simple description in terms of the Harish-Chandra isomorphism $U(\frk)^\frk\cong\bbC[\frt^*]^{W_\frk}$. Namely, under the Harish-Chandra isomorphism, the infinitesimal character of $E_{\sigma\rho-\rho_\frk}$ corresponds to evaluation on $\sigma\rho$. Therefore, the restriction of $\alpha$ to $U(\frk)^\frk$ corresponds to the 
algebra morphism, $ \ev_\rho : \bbC[\ft^*]^{W_\frk} \to \Cl(\frp)^\frk =\Pr(S)$ 
given by 
\eq \label{evrho}
\ev_\rho(p) = \sum_{\sigma \in W^1} p(\sigma \rho) \pr_\sigma.
\eeq

\begin{lem}\label{l::alphaSurjective}
  The map $\ev_\rho$ is surjective. 
\end{lem}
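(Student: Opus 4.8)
The goal is to show that the algebra map $\ev_\rho:\bbC[\ft^*]^{W_\frk}\to\Pr(S)\cong\bbC^{|W^1|}$ of~\eqref{evrho} hits every projection $\pr_\sigma$, or equivalently that the evaluation vectors $\bigl(p(\sigma\rho)\bigr)_{\sigma\in W^1}$, as $p$ ranges over $W_\frk$-invariant polynomials, span all of $\bbC^{|W^1|}$. Since $\ev_\rho$ is an algebra homomorphism and $\Pr(S)$ is the coordinatewise-multiplication algebra $\bbC^{|W^1|}$, it suffices to separate points: if for every pair $\sigma\neq\tau$ in $W^1$ there is a $W_\frk$-invariant polynomial $p$ with $p(\sigma\rho)\neq p(\tau\rho)$, then the image is a point-separating subalgebra of $\bbC^{|W^1|}$ containing the constants, hence (by the finite-dimensional, purely algebraic form of Stone--Weierstrass, or simply by a Vandermonde/Lagrange-interpolation argument) all of $\bbC^{|W^1|}$.

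**Key steps.** First I would reduce to the point-separation statement just described; this is the conceptual heart and is essentially immediate from Schur/the algebra structure already recorded. Second, I would prove point separation: for $\sigma,\tau\in W^1$ with $\sigma\rho\neq\tau\rho$, I need a $W_\frk$-invariant $p$ distinguishing them. The natural choice is a symmetric-function-type invariant: since $W_\frk$ (for $U(k)\times U(n-k)$ this is $S_k\times S_{n-k}$, and in general a product of Weyl groups of the Levi factors) acts by permuting the $\ft^*$-coordinates within blocks, the $W_\frk$-invariants are generated by power sums $p_j(x)=\sum_i x_i^j$ restricted to each block. Two points $\sigma\rho$ and $\tau\rho$ lie in the same $W_\frk$-orbit iff all block-wise power sums agree; but $\sigma\rho\neq\tau\rho$ for $\sigma\neq\tau$ in $W^1$ precisely because $W^1$ is a set of coset representatives for $W_\frk\backslash W_\frg$ acting on the regular element $\rho$ (the coordinates of $\rho$ are distinct), so distinct elements of $W^1$ give $\rho$-translates in distinct $W_\frk$-orbits. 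Hence some block-wise power sum separates them. Third, conclude: the image of $\ev_\rho$ is a unital point-separating subalgebra of $\bbC^{|W^1|}$, therefore equals $\bbC^{|W^1|}$, i.e. $\ev_\rho$ is surjective. An alternative, more hands-on third step: exhibit explicit interpolating polynomials $\prod_{\tau\neq\sigma}\frac{p_\tau - p_\tau(\tau\rho)}{p_\tau(\sigma\rho)-p_\tau(\tau\rho)}$ built from the separating invariants, whose image is $\pr_\sigma$.

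**Main obstacle.** The only subtle point is the assertion that $\sigma\rho$ and $\tau\rho$ lie in distinct $W_\frk$-orbits for $\sigma\neq\tau$ in $W^1$; once that is in hand everything else is formal. This follows from the definition~\eqref{w1} of $W^1$ as the $\frk$-dominant representatives: each $W_\frk$-orbit in $W_\frg\rho$ meets the $\frk$-dominant chamber in exactly one point (as $\rho$ is $\frg$-regular, hence all $|W_\frg|$ elements of $W_\frg\rho$ are distinct and the $W_\frk$-action on them is free), and $W^1$ picks out exactly those representatives. So I would make sure to state this orbit-representative fact cleanly, perhaps citing the standard description of $W^1$ as minimal-length coset representatives for $W_\frk\backslash W_\frg$, and then point separation — and hence Lemma~\ref{l::alphaSurjective} — is immediate.
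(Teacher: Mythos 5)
Your proposal is correct, and it arrives at the result by a somewhat different route than the paper. The paper's proof is a direct construction: it writes down an explicit Lagrange-type polynomial $Q_\sigma$ that is $1$ at $\sigma\rho$ and $0$ at every other point of the full $W_\frg$-orbit $W_\frg\rho = \{w\tau\rho : w\in W_\frk,\ \tau\in W^1\}$, and then forces $W_\frk$-invariance by averaging, $P_\sigma = \frac{1}{|W_\frk|}\sum_{w\in W_\frk}wQ_\sigma$. Because $Q_\sigma$ already vanishes on the entire $W_\frk$-orbit of each $\tau\rho$ with $\tau\neq\sigma$ (and on the non-identity part of the orbit of $\sigma\rho$), the averaging does not spoil the interpolation property, and the $P_\sigma$ span a complement to $\ker\ev_\rho$. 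You instead reduce to the abstract statement that a unital, point-separating subalgebra of $\bbC^N$ (with coordinatewise multiplication) is all of $\bbC^N$, and then supply point separation by observing that distinct $\sigma,\tau\in W^1$ give $\sigma\rho,\tau\rho$ in distinct $W_\frk$-orbits, so some $W_\frk$-invariant separates them. Both proofs hinge on the same key fact — that $W^1$ indexes the $W_\frk$-orbits in $W_\frg\rho$, and that $\rho$ is $W_\frg$-regular so these orbits are disjoint and free — but you make it explicit while the paper uses it implicitly in arranging that $Q_\sigma$ vanish at the right points. Your approach is more conceptual (it invokes the structure theory of finite-dimensional commutative algebras, e.g. CRT on the distinct maximal ideals $m_\sigma=\ker(\ev_{\sigma\rho})$), whereas the paper's is more elementary and self-contained. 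Your suggested alternative of building the idempotent directly from separating invariants, $\prod_{\tau\neq\sigma}\frac{p_\tau - p_\tau(\tau\rho)}{p_\tau(\sigma\rho)-p_\tau(\tau\rho)}$, is in fact slightly cleaner than the paper's construction: it is automatically $W_\frk$-invariant, so no averaging step is needed.
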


\begin{proof}
We need to prove that for any choice of scalars $a_\sigma$, $\sigma\in W^1$, there is  $P \in \bbC[\frt^*]^{W_\frk}$ such that 
$P(\sigma\rho)=a_\sigma$ for all $\sigma\in W^1$. The polynomial $P$ is similar to the Lagrange interpolation polynomial, but we have to ensure it is $W_\frk$-invariant.

It is enough to find the polynomials $P_\sigma \in \bbC[\frt^*]^{W_\frk}$, $\sigma\in W^1$, such that $P_\sigma(\tau\rho) = \delta_{\sigma\tau}$ for any~$\tau\in W^1$. 

We define
\[
  Q_\sigma (\gamma) = \prod_{\substack{w\in W_\frk \\ w\neq id \\ \tau\in W^1}}^{} \frac{||\gamma - w \tau \rho||^2}{||\sigma \rho - w \tau \rho||^2} \prod_{\tau \neq \sigma}^{} \frac{||\gamma - \tau \rho||^2}{||\sigma \rho - \tau \rho||^2},\qquad \gamma\in\frt^*.
  \]    
Then
\begin{align*}
  &Q_\sigma (\sigma \rho) = 1, \\
  &Q_\sigma (w \tau \rho) = 0 \quad \text{ if } w\in W_\frk\setminus\{\id\}  \text{ or } \sigma\neq\tau.
\end{align*}
  
So $Q_\sigma(\tau\rho) = \delta_{\sigma\tau}$ as required, but the $Q_\sigma$ are not necessarily $W_\frk$-invariant. So we set
\[
  P_\sigma = \frac{1}{|W_\frk|} \sum_{w \in W_\frk} w Q_\sigma
\]
and obtain the polynomials we were looking for.
\end{proof}

The kernel of~$\ev_\rho$ clearly contains the ideal~$I_\rho$ generated by $\{ p \in \bbC[\frt^*]^{W_\frg}: p (\rho) = 0\}$. To see that in fact $\ker\ev_\rho=I_\rho$, we prove

\begin{lem}\label{l::Icodim}
    The ideal $I_\rho$ is of codimension $|W^1|$ in $\bbC[\frt^*]^{W_\frk}$.
\end{lem}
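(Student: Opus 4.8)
The plan is to show that the quotient $\bbC[\frt^*]^{W_\frk}/I_\rho$ has dimension exactly $|W^1|$. The natural strategy splits into an upper bound and a lower bound.

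\smallskip

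\textbf{Lower bound.} First I would observe that by Lemma~\ref{l::alphaSurjective} the map $\ev_\rho:\bbC[\frt^*]^{W_\frk}\to\Pr(S)\cong\bbC^{|W^1|}$ is surjective, and that $I_\rho\subseteq\ker\ev_\rho$: indeed, if $p\in\bbC[\frt^*]^{W_\frg}$ vanishes at $\rho$, then $\ev_\rho(p)=\sum_\sigma p(\sigma\rho)\pr_\sigma=0$, since $\sigma\rho$ lies in the $W_\frg$-orbit of $\rho$ and $p$ is $W_\frg$-invariant; and $\ker\ev_\rho$ is an ideal. Hence $\ev_\rho$ factors through $\bbC[\frt^*]^{W_\frk}/I_\rho$, giving a surjection onto $\bbC^{|W^1|}$, so $\dim\bbC[\frt^*]^{W_\frk}/I_\rho\ge|W^1|$. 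Equivalently, this shows $\operatorname{codim} I_\rho\ge|W^1|$.

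\smallskip

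\textbf{Upper bound.} For the reverse inequality I would use the classical structure theory of invariants. Since $(\frg,\frk)$ is an equal rank pair, $\bbC[\frt^*]^{W_\frk}$ is a free module over $\bbC[\frt^*]^{W_\frg}$ of rank $|W_\frg|/|W_\frk|=|W^1|$ --- this is Chevalley's theorem combined with the fact that, because $\frt$ is a common Cartan, $\bbC[\frt^*]$ is free over $\bbC[\frt^*]^{W_\frg}$ of rank $|W_\frg|$ and over $\bbC[\frt^*]^{W_\frk}$ of rank $|W_\frk|$, so $\bbC[\frt^*]^{W_\frk}$ is free over $\bbC[\frt^*]^{W_\frg}$ of rank $|W_\frg|/|W_\frk|=|W^1|$. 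Now $I_\rho$ is generated by the $W_\frg$-invariants vanishing at $\rho$; writing $\bbC[\frt^*]^{W_\frg}=\bbC[f_1,\dots,f_r]$ with $f_i$ algebraically independent generators (so $r=\dim\frt$), the ideal $I_\rho$ is generated by $f_1-f_1(\rho),\dots,f_r-f_r(\rho)$. Therefore
\[
\bbC[\frt^*]^{W_\frk}/I_\rho=\bbC[\frt^*]^{W_\frk}\otimes_{\bbC[\frt^*]^{W_\frg}}\bigl(\bbC[\frt^*]^{W_\frg}/(f_i-f_i(\rho))\bigr)=\bbC[\frt^*]^{W_\frk}\otimes_{\bbC[\frt^*]^{W_\frg}}\bbC,
\]
where $\bbC$ is the residue field at the maximal ideal $(f_i-f_i(\rho))$. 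Since $\bbC[\frt^*]^{W_\frk}$ is free of rank $|W^1|$ over $\bbC[\frt^*]^{W_\frg}$, this tensor product is a $\bbC$-vector space of dimension exactly $|W^1|$. Combining with the lower bound gives $\operatorname{codim} I_\rho=|W^1|$, and as a bonus $\ker\ev_\rho=I_\rho$.

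\smallskip

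\textbf{Main obstacle.} The one step needing care is the freeness of $\bbC[\frt^*]^{W_\frk}$ as a $\bbC[\frt^*]^{W_\frg}$-module, and more specifically the claim that $I_\rho$ --- generated by the augmentation-type ideal of $\bbC[\frt^*]^{W_\frg}$ shifted to $\rho$ --- is precisely the extension of the maximal ideal $(f_i-f_i(\rho))$. This is standard (it is the "coinvariant algebra at a regular-type point" computation), but one should check that evaluation at $\rho$ does define a maximal ideal of $\bbC[\frt^*]^{W_\frg}$ whose extension to $\bbC[\frt^*]^{W_\frk}$ has the expected finite colength; this follows since $\bbC[\frt^*]^{W_\frg}\hookrightarrow\bbC[\frt^*]^{W_\frk}$ is a finite (hence integral) extension of finitely generated $\bbC$-algebras, so fibers are finite and of the generically constant length $|W^1|$ — and freeness upgrades "generically" to "always." Alternatively, one can bypass module theory entirely: combine the surjection from the lower bound with the fact, already used implicitly, that $\Pr(S)=\End_\frk S$ with $S$ multiplicity-free over $\frk$ and $|W^1|$ components, to conclude the map $\bbC[\frt^*]^{W_\frk}/I_\rho\to\Pr(S)$ is an isomorphism once one knows $\dim\bbC[\frt^*]^{W_\frk}/I_\rho\le|W^1|$ --- which is the content of the upper bound above.
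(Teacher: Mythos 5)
Your argument is correct, but it reaches the upper bound by a genuinely different route from the paper. The paper works on the graded level: it introduces the ideal $J\subseteq\bbC[\frt^*]$ generated by the $W_\frg$-invariants without constant term, identifies $\bbC[\frt^*]/J$ with the coinvariant algebra $\cong\bbC[W_\frg]$, takes $W_\frk$-invariants of the resulting exact sequence (exactness is preserved since $W_\frk$ is finite and we are in characteristic zero) to get $\operatorname{codim}I_+=\dim\bbC[W_\frg]^{W_\frk}=|W^1|$, and then transfers this to $I_\rho$ via the fact that passing to the associated graded ideal preserves codimension. You instead stay entirely at the filtered (ungraded) level: you identify $I_\rho$ as the extension to $\bbC[\frt^*]^{W_\frk}$ of the maximal ideal $(f_i-f_i(\rho))$ of $\bbC[\frt^*]^{W_\frg}$ and read off the colength from the freeness of $\bbC[\frt^*]^{W_\frk}$ over $\bbC[\frt^*]^{W_\frg}$ of rank $|W_\frg|/|W_\frk|=|W^1|$. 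Both arguments ultimately rest on Chevalley's theorem, but yours trades the paper's appeal to ``$\gr$ preserves codimension'' and to $\gr I_\rho=I_+$ (which itself deserves a word of justification) for the freeness of the invariant-ring extension --- a standard but not entirely free fact (graded Nakayama applied to a direct summand of a free module, or Cohen--Macaulayness of $\bbC[\frt^*]^{W_\frk}$). Your version has the advantage of yielding $\ker\ev_\rho=I_\rho$ in one stroke and of making transparent exactly where equal rank is used (so that $W_\frg$ acts on $\frt^*$ as a reflection group); your combined lower-bound step duplicates the discussion the paper places just before the lemma, and the ``bypass module theory entirely'' remark at the end is not a real alternative, since it presupposes the upper bound. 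One small point to tighten: the phrase ``freeness upgrades generically to always'' should be replaced by the direct statement that the fiber of a free module of rank $N$ at any maximal ideal has dimension $N$; no genericity or regularity of $\rho$ is needed for the codimension count itself.
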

\begin{proof}
    Let~$J$ be the ideal of~$\bbC[\frt^*]$ generated by $\{ p \in \bbC[\frt^*]^{W_\frg}: p (0) = 0\}$, 
 then $\bbC[\frt^*]/J$ is the coinvariant algebra associated to $W_\frg$ and is isomorphic as a $W_\frg$-module to $\bbC[W_\frg]$, see~\cite[Section 3.6]{HumphreysRefBook}, hence has dimension~$|W_\frg|$. Therefore $J$ has codimension $|W_\frg|$.
    The associated graded ideal of~$I_\rho$ is~$I_+$. Applying $W_\frk$ invariance to the exact sequence of $W_\frg$-modules 
    \[ 0 \to J \to \bbC[\frt^*] \to \bbC[\frt^*]/J \cong \bbC[W_\frg] \to 0 \]
    we obtain the exact sequence 
     \[ 0 \to J^{W_\frk} \cong I_+ \to \bbC[\frt^*]^{W_\frk} \to (\bbC[\frt^*]/J)^{W_\frk} \cong \bbC[W_\frg]^{W_\frk} \to 0. \]
     We can conclude that $I_+ = J^{W_\frk}$ is of codimension $\dim \bbC[W_\frg]^{W_\frk} = |W^1|$ and hence, since the functor~$\gr$ preserves codimension, so is~$I_\rho$.
\end{proof}

\begin{proof}[Proof of Theorem \ref{thm:structure}] We have seen that the map $\alpha:U(\frk)^\frk\to \Cl(\frp)^\frk$ becomes $\ev_\rho$ under identifications $U(\frk)^\frk=\bbC[\frt^*]^{W_\frk}$ and $\Cl(\frp)^\frk=\Pr(S)$. By Lemma~\ref{l::alphaSurjective}, $\ev_\rho$ is onto, and by Lemma~\ref{l::Icodim}, its kernel is exactly~$I_\rho$. 
\end{proof}

The above discussion can be summarized by the following commutative diagram, with bottom row exact:
\[
  \begin{tikzcd}
    &&U(\frk)^\frk \arrow[r, "\alpha"] \arrow[d, "\mathrm{hc}"]& \Cl(\frp)^\frk \arrow[d, "\cong"]\\
    0\arrow[r] & I_\rho \arrow[r] &\bbC[\frt^*]^{W_\frk}\arrow[r, "\ev_\rho"] & \Pr(S) \arrow[r] & 0
  \end{tikzcd}
\]

\begin{rem} The above proof of Theorem \ref{thm:structure} works equally well for an unequal rank symmetric pair $(\frg,\frk)$. In this case, the statement is that $\alpha$ (or $\ev_\rho$) maps $U(\frk)^\frk\cong\bbC[\frt^*]^{W_\frk}$ onto $\Pr(S)$, which is however no longer all of $\Cl(\frp)^\frk$, and that the kernel of $\alpha$ is  generated by the polynomials invariant under the Weyl group of the pair $(\frg,\frt)$. Here $\frt$ is a Cartan subalgebra of $\frk$, which is no longer a Cartan subalgebra of $\frg$. 

In the more general case when $\frk$ is a reductive quadratic subalgebra of $\frg$ which is not necessarily symmetric, the above proof still shows that $\alpha$ maps $U(\frk)^\frk$ onto $\Pr(S)$, but it is more difficult to describe its kernel.
\end{rem}

\begin{proof}[Proof of Theorem \ref{thm:main}]
  Recall that we view~$\Cl(\frp)$ and~$\twedge\frp$ as the same space with two multiplications, $\bullet$ and~$\wedge$. Note that for elements~$a$ and~$b$ of 
  this space such that~$a$ (resp.~$b$) is in the $k$th (resp. $l$th) filtered piece but not the $(k-1)$st (resp. $(l-1)$st) filtered piece, we have  
  \[
    a_{[k]} \wedge b_{[l]} = (a \bullet b)_{[k+l]}.
  \]
  Also, $\gr \alpha(s_\lambda) =  \alpha(s_\lambda)_{[2l(\lambda)]}$, hence
  \begin{multline*}
    \gr \alpha( s_\lambda)\wedge \gr \alpha( s_\mu)  =\alpha(s_\lambda)_{[2l(\lambda)]} \wedge \alpha(s_\mu)_{[2l(\mu)]}  = (\alpha(s_\lambda) \bullet \alpha(s_\mu))_{[2l(\lambda) +2l(\mu)]}.
  \end{multline*}
\end{proof}

\section{Examples} \label{s:apptoLR}
We highlight the application of Theorem~\ref{thm:main} to the calculation of Littlewood-Richardson coefficients below with a couple of examples.

\begin{ex}
For $\fg = \fgl_4$ and $\fk = \fgl_2 \oplus \fgl_2$, the set $\sigma \rho$ for $\sigma\in W^1$ is
\begin{gather*}
  (\tfrac{3}{2},\, \tfrac{1}{2},\, -\tfrac{1}{2},\, -\tfrac{3}{2}),\, \qquad (\tfrac{3}{2},\, -\tfrac{1}{2},\, \tfrac{1}{2},\, -\tfrac{3}{2}),\, \qquad (\tfrac{3}{2},\, -\tfrac{3}{2},\, \tfrac{1}{2},\, -\tfrac{1}{2}),\, \\
  (\tfrac{1}{2},\, -\tfrac{1}{2},\, \tfrac{3}{2},\, -\tfrac{3}{2}),\, \qquad (\tfrac{1}{2},\, -\tfrac{3}{2},\, \tfrac{3}{2},\, -\tfrac{1}{2}),\, \qquad (-\tfrac{1}{2},\, -\tfrac{3}{2},\, \tfrac{3}{2},\, \tfrac{1}{2})
\end{gather*}
Let $\{\pr_i: i = 1,\ldots,6\} = \{\pr_\sigma: \sigma \in W^1\}$ be the basis of orthogonal projections in $\Pr(S)$. By~\eqref{evrho},
\[
  \alpha(s_\lambda) = \ev_\rho(s_\lambda) = \sum_{\sigma\in W^1} s_\lambda(\sigma\rho)\pr_\sigma, \quad s_\lambda \in \cP_{k \times (n-k)} .  
\]
For simplicity of notation, for the remainder of these examples we will replace~$\alpha(s_\lambda)$ by~$s_\lambda$. Expressing $s_\lambda$ as a~vector in $\bbC^6 \cong \Pr(S)$ 
\begin{align*}
  s_{(0,0)} ={}& (1,1,1,1,1,1),&
  s_{(1,0)} ={}& (2,1,0,0,-1,-2) ,\\
  s_{(2,0)} ={}&( \tfrac{13}{4} ,\tfrac{7}{4} , \tfrac{9}{4} , \tfrac{1}{4},\tfrac{7}{4},\tfrac{13}{4}),&  
  s_{(1,1)} ={}& (\tfrac{3}{4} - \tfrac{3}{4} ,- \tfrac{9}{4} , - \tfrac{1}{4}, - \tfrac{3}{4} ,\tfrac{3}{4}),\\
  s_{(2,1)} ={}& (\tfrac{3}{2} ,  - \tfrac{3}{4} , 0,0, \tfrac{3}{4} , - \tfrac{3}{2}),&
  s_{(2,2)} ={}&( \tfrac{9}{16}, \tfrac{9}{16}, \tfrac{81}{16} , \tfrac{1}{16} , \tfrac{9}{16}, \tfrac{9}{16}).
\end{align*}
The change of basis from $s_\lambda$ to $\pr_i$ and its inverse are given by the matrices
\[M =\begin{bmatrix}
1 & 1 & 1 & 1 & 1 & 1 \\
2 & 1 & 0 & 0 & -1 & -2 \\
\tfrac{13}{4} & \tfrac{7}{4} & \tfrac{9}{4} & \tfrac{1}{4} &\tfrac{7}{4} & \tfrac{13}{4} \\[3pt]
\tfrac{3}{2} & \tfrac{-3}{4} & 0& 0&\tfrac{3}{4} & \tfrac{-3}{2} \\[3pt]
\tfrac{9}{16} & \tfrac{9}{16} & \tfrac{81}{16} & \tfrac{1}{16} & \tfrac{9}{16} & \tfrac{9}{16} \\
    
\end{bmatrix}, \ M^{-1} =\begin{bmatrix}
\tfrac{3}{64} & \tfrac{1}{8} & \tfrac{1}{16} & \tfrac{13}{48} & \tfrac{1}{ 6} & \tfrac{1}{12} \\[3pt]
\tfrac{-3}{16} & \tfrac{1}{4} & \tfrac{1}{4} & \tfrac{-7}{12} & \tfrac{-1}{3} & \tfrac{-1}{3} \\[3pt]
\tfrac{1}{16} & 0& \tfrac{-1}{16} & \tfrac{1}{16} & 0 & \tfrac{1}{4} \\[3pt]
\tfrac{-3}{16} & \tfrac{-1}{4} & \tfrac{1}{4}& \tfrac{-7}{12}&\tfrac{1}{3} & \tfrac{-1}{3} \\[3pt]
\tfrac{3}{64} & \tfrac{-1}{8} & \tfrac{1}{16} & \tfrac{13}{48} & \tfrac{-1}{6} & \tfrac{1}{12} \\[3pt]
    
\end{bmatrix}.\] 

The multiplication~$\bullet$ in $\Cl(\frp)^\frk$ is given by expressing~$s_\lambda$ and~$s_\mu$ as vectors in $\bbC^6 = \mathrm{Span}_\bbC(\pr_i: i =1,\ldots ,6)$ using~$M$, then multiplying with the Hadamard product. The resulting vector is then expressed as a~combination of~$s_\lambda$ by applying~$M^{-1}$. Explicitly $s_\lambda \bullet s_\mu = M^{-1}(M(s_\lambda) \bullet_H M(s_\mu))$, where~$\bullet_H$ is the Hadamard (component-wise) product. This gives the new multiplication below: (The terms vanishing in the associated graded algebra~$\twedge(\fp)^\fk$, equivalently those vanishing under the projection $P_{[l(\lambda) +  l (\mu)]}$, are underlined.) 
{\small\begin{align*}\allowbreak
  s_{(1,0)}\bullet s_{(1,0)}={}& s_{(1,1)}+s_{(2,0)},& 
  s_{(1,0)}\bullet s_{(2,0)}={}& \underline{\tfrac{5}{2} s_{(1,0)}}+s_{(2,1)},\\
  s_{(1,0)}\bullet s_{(1,1)}={}& s_{(2,1)},& 
  s_{(1,0)}\bullet s_{(2,1)}={}& \underline{\tfrac{9}{16} s_{(0,0)}+\tfrac{5}{2} s_{(1,1)}}+s_{(2,2)},\\
  s_{(1,0)}\bullet s_{(2,2)}={}& \underline{\tfrac{9}{16} s_{(1,0)}},& 
  s_{(2,0)}\bullet s_{(2,0)}={}& \underline{\tfrac{5}{2} s_{(1,1)}+\tfrac{5}{2} s_{(2,0)}}+s_{(2,2)},\\
  s_{(1,1)}\bullet s_{(2,0)}={}& \underline{\tfrac{9}{16} s_{(0,0)}+\tfrac{5}{2} s_{(1,1)}},& 
  s_{(2,0)}\bullet s_{(2,1)}={}& \underline{\tfrac{9}{16} s_{(1,0)}+\tfrac{5}{2} s_{(2,1)}},\\
  s_{(2,0)}\bullet s_{(2,2)}={}& \underline{\tfrac{9}{16} s_{(1,1)}+\tfrac{5}{2} s_{(2,2)}},& 
  s_{(1,1)}\bullet s_{(1,1)}={}& s_{(2,2)},\\
  s_{(1,1)}\bullet s_{(2,1)}={}& \underline{\tfrac{9}{16} s_{(1,0)}},&
  s_{(1,1)}\bullet s_{(2,2)}={}& \underline{\tfrac{9}{16} s_{(2,0)}-\tfrac{5}{2} s_{(2,2)}},\\
  s_{(2,1)}\bullet s_{(2,1)}={}& \underline{\tfrac{9}{16} s_{(1,1)}+\tfrac{9}{16} s_{(2,0)}},&
  s_{(2,1)}\bullet s_{(2,2)}={}& \underline{\tfrac{9}{16} s_{(2,1)}},\\
  s_{(2,2)}\bullet s_{(2,2)}={}& \underline{\tfrac{81}{256} s_{(0,0)}+\tfrac{45}{32} s_{(1,1)}}\\
  {}& \underline{{} -\tfrac{45}{32} s_{(2,0)}+\tfrac{25}{4}s_{(2,2)}}.
\end{align*}}

\end{ex}

\begin{ex}
  In this example $\fg = \fgl_5$ and $\fk = \fgl_3 \oplus \fgl_2$. Then $|W^1| = 10$.
The change of bases matrices are: 
{\small\[ 
M = \begin{bmatrix}
  1 & 1& 1 &1 & 1& 1& 1& 1& 1& 1\\[3pt]
  3 & 2& 1 &0 & 1& 0& -1& -1& -2& -3\\[3pt]
  7 & 4& 3 &4 & 1& 1& 3& 1& 4& 7\\[3pt]
  2 & 0& -2 &-4 & 0& -1& -2& 0& 0& 2\\[3pt]
  15 & 8& 5 &0 & 1& 0& -5& -1& -8& -15\\[3pt]
  6 & 0& -2 &0 & 0& 0& 2& 0& 0& -6\\[3pt]
  14 & 0& -6 &-16 &0& -1& -6& 0& 0& 14\\[3pt]
  4 & 0& 4 &16 & 0& 1& 4& 0& 0& 4\\[3pt]
  12 & 0& 4 &0 & 0& 0& -4& 0& 0& -12\\[3pt]
  8 & 0& -8 &-64 & 0& -1& -8& 0& 0& 8\\[3pt]

\end{bmatrix}, \]\[
\quad M^{-1} = \begin{bmatrix} 
  0&0&0&\tfrac{1}{36} &0& \tfrac{1}{24}&  \tfrac{1}{72} & \tfrac{7}{144}
  &\tfrac{1}{48} &\tfrac{1}{144}\\[3pt]
  \tfrac{-1}{6}&\tfrac{-1}{12}&\tfrac{1}{6}&\tfrac{-5}{24}&\tfrac{1}{12} & 0& \tfrac{1}{24} & \tfrac{-5}{24} & \tfrac{-1}{12} &\tfrac{-1}{24}\\[3pt]
  0&0&0&\tfrac{1}{4} &0& \tfrac{-1}{8}&  \tfrac{-1}{8} & \tfrac{3}{16} &\tfrac{1}{16} &\tfrac{1}{16}\\[3pt]
  0&0&0&\tfrac{-1}{36} &0& 0&  \tfrac{1}{36} & \tfrac{-1}{36} &0 &\tfrac{-1}{36}\\[3pt]
  \tfrac{2}{3}&\tfrac{2}{3}&\tfrac{-1}{6}&\tfrac{5}{6}&\tfrac{-1}{6} & 0& \tfrac{-1}{6} & \tfrac{5}{24} & \tfrac{1}{24} &\tfrac{1}{24}\\[3pt]
  0&0&0&\tfrac{-16}{9} &0& 0&  \tfrac{4}{9} & \tfrac{-4}{9} &0 &\tfrac{-1}{9}\\[3pt]
     0&0&0&\tfrac{1}{4} &0& \tfrac{1}{8}&  \tfrac{-1}{8} & \tfrac{3}{16} &\tfrac{-1}{16} &\tfrac{1}{16}\\
  \tfrac{2}{3}&\tfrac{-2}{3}&\tfrac{-1}{6}&\tfrac{5}{6}&\tfrac{1}{6} & 0& \tfrac{-1}{6} & \tfrac{5}{24} & \tfrac{-1}{24} &\tfrac{1}{24}\\[3pt]
  \tfrac{-1}{6}&\tfrac{1}{12}&\tfrac{1}{6}&\tfrac{-5}{24}&\tfrac{-1}{12} & 0& \tfrac{1}{24} & \tfrac{-5}{24} & \tfrac{1}{12} &\tfrac{-1}{24}\\[3pt]
  0&0&0&\tfrac{1}{36} &0& \tfrac{-1}{24}&  \tfrac{1}{72} & \tfrac{7}{144} &\tfrac{-1}{48} & \tfrac{1}{144}\\[3pt]
\end{bmatrix}.
\]}
Again the new multiplication is calculated by expressing~$s_\lambda$ and~$s_\mu$ as vectors in~$\bbC^{10}$, performing the Hadamard product then applying the change of basis from~$\{\pr_i\}$ to~$\{s_\lambda \}$. 
(Again the terms vanishing in the associated graded algebra~$\twedge(\fp)^\fk$ are underlined.)
{\allowdisplaybreaks
  \small\begin{align*}
s_{(1,0)} \bullet s_{(1,0)}={}& s_{(1,1)}+s_{(2,0)},\\
s_{(1,0)} \bullet s_{(2,0)}={}& s_{(2,1)}+s_{(3,0)}, &
s_{(1,0)} \bullet s_{(1,1)}={}& s_{(2,1)},\\
s_{(1,0)} \bullet s_{(3,0)}={}& s_{(3,1)}- \underline{4 s_{(0,0)}+5 s_{(2,0)}},&
s_{(1,0)} \bullet s_{(2,1)}={}& s_{(2,2)}+s_{(3,1)},\\
s_{(1,0)} \bullet s_{(3,1)}={}& s_{(3,2)} + \underline{5 s_{(2,1)}},&
s_{(1,0)} \bullet s_{(2,2)}={}& s_{(3,2)},\\
s_{(1,0)} \bullet s_{(3,2)}={}& s_{(3,3)} + \underline{4 s_{(1,1)}+5 s_{(2,2)}},&
s_{(1,0)} \bullet s_{(3,3)}={}& \underline{4 s_{(2,1)}},\\
s_{(2,0)} \bullet s_{(2,0)}={}& s_{(2,2)}+s_{(3,1)} \underline{-4 s_{(0,0)}+5 s_{(2,0)}},&
s_{(1,1)} \bullet s_{(2,0)}={}& s_{(3,1)},\\
s_{(2,0)} \bullet s_{(3,0)}={}& s_{(3,2)} \underline{-4 s_{(1,0)}+5 s_{(2,1)}+5 s_{(3,0)}},&
s_{(2,0)} \bullet s_{(2,1)}={}& s_{(3,2)} + \underline{5 s_{(2,1)}},\\
s_{(2,0)} \bullet s_{(3,1)}={}& s_{(3,3)} + \underline{5 s_{(2,2)}+5 s_{(3,1)}},&
s_{(2,0)} \bullet s_{(2,2)}={}& \underline{4 s_{(1,1)}+5 s_{(2,2)}},\\
s_{(2,0)} \bullet s_{(3,2)}={}& \underline{4 s_{(2,1)}+5 s_{(3,2)}},&
s_{(2,0)} \bullet s_{(3,3)}={}& \underline{4 s_{(2,2)}+5 s_{(3,3)}},
\end{align*}                                                                          
\begin{align*}
s_{(1,1)} \bullet s_{(1,1)}={}& s_{(2,2)},\\
s_{(1,1)} \bullet s_{(3,0)}={}& \underline{5 s_{(2,1)}},&
s_{(1,1)} \bullet s_{(2,1)}={}& s_{(3,2)},\\
s_{(1,1)} \bullet s_{(3,1)}={}& \underline{4 s_{(1,1)}+5 s_{(2,2)}},&
s_{(1,1)} \bullet s_{(2,2)}={}& s_{(3,3)},\\
s_{(1,1)} \bullet s_{(3,2)}={}& \underline{4 s_{(2,1)}},&
s_{(1,1)} \bullet s_{(3,3)}={}& \underline{4 s_{(3,1)}-5 s_{(3,3)}},\\
s_{(3,0)} \bullet s_{(3,0)}={}& s_{(3,3)} \underline{-20 s_{(0,0)}+21 s_{(2,0)}}&
s_{(2,1)} \bullet s_{(3,0)}={}& \underline{5 s_{(2,2)}+5 s_{(3,1)}},\\
                          {}& \underline{+5 s_{(2,2)}+5 s_{(3,1)}},&\\
s_{(3,0)} \bullet s_{(3,1)}={}& \underline{25 s_{(2,1)}+5 s_{(3,2)}},&
s_{(2,2)} \bullet s_{(3,0)}={}& \underline{5 s_{(3,2)}},\\
s_{(3,0)} \bullet s_{(3,2)}={}& \underline{20 s_{(1,1)}+25 s_{(2,2)}+5 s_{(3,3)}},&
s_{(3,0)} \bullet s_{(3,3)}={}& \underline{20 s_{(2,1)}},\\
s_{(2,1)} \bullet s_{(2,1)}={}& s_{(3,3)} + \underline{4 s_{(1,1)}+5 s_{(2,2)}},&
s_{(2,1)} \bullet s_{(3,1)}={}& \underline{4 s_{(2,1)}+5 s_{(3,2)}},\\
s_{(2,1)} \bullet s_{(2,2)}={}& \underline{4 s_{(2,1)}},&
s_{(2,1)} \bullet s_{(3,2)}={}& \underline{4 s_{(2,2)}+4 s_{(3,1)}},\\
s_{(2,1)} \bullet s_{(3,3)}={}& \underline{4 s_{(3,2)}},&
s_{(3,1)} \bullet s_{(3,1)}={}& \underline{20 s_{(1,1)}+25 s_{(2,2)}+4 s_{(3,1)}},
\end{align*}
\begin{align*}
s_{(2,2)} \bullet s_{(3,1)}={}& \underline{4 s_{(2,2)}+5 s_{(3,3)}},\\
s_{(3,1)} \bullet s_{(3,2)}={}& \underline{20 s_{(2,1)}+4 s_{(3,2)}},\\
s_{(3,1)} \bullet s_{(3,3)}={}& \underline{20 s_{(3,1)}-21 s_{(3,3)}},\\
s_{(2,2)} \bullet s_{(2,2)}={}& \underline{4 s_{(3,1)}-5 s_{(3,3)}},\\
s_{(2,2)} \bullet s_{(3,2)}={}& \underline{4 s_{(3,2)}},\\
s_{(2,2)} \bullet s_{(3,3)}={}& \underline{16 s_{(1,1)}+20 s_{(2,2)}-20 s_{(3,1)}+25 s_{(3,3)}},\\
s_{(3,2)} \bullet s_{(3,2)}={}& \underline{16 s_{(1,1)}+20 s_{(2,2)}+4 s_{(3,3)}},\\
s_{(3,2)} \bullet s_{(3,3)}={}& \underline{16 s_{(2,1)}},\\
s_{(3,3)} \bullet s_{(3,3)}={}& \underline{-80 s_{(1,1)}-84 s_{(2,2)}+100 s_{(3,1)}-105 s_{(3,3)}}.
\end{align*}}

\end{ex}

In both examples, as well as in general, deleting the underlined terms gives the Littlewood-Richardson coefficients.

\providecommand{\bysame}{\leavevmode\hbox to3em{\hrulefill}\thinspace}
\providecommand{\MR}{\relax\ifhmode\unskip\space\fi MR }
\providecommand{\MRhref}[2]{%
  \href{http://www.ams.org/mathscinet-getitem?mr=#1}{#2}
}
\providecommand{\href}[2]{#2}


\end{document}